\setlist[itemize]{noitemsep}
\setlist[enumerate]{noitemsep}
\newtheorem{theorem}{Theorem}
\theoremstyle{definition}
\newcommand{\fn}[1]{\mathsf{#1}}
\newcommand{\ex}[1]{\exists #1. \,}
\newcommand{\fa}[1]{\forall #1. \,}
\newcommand{\liff}{\leftrightarrow}
\newcommand{\limplies}{\to}
\begin{document}

\title{An Impossible Asylum}
\markright{An Impossible Asylum}
\author{Jeremy Avigad, Seulkee Baek, Alexander Bentkamp,\\ Marijn Heule, and Wojciech Nawrocki}

\maketitle

\begin{abstract}
In 1982, Raymond Smullyan published an article, ``The Asylum of Doctor Tarr and Professor Fether,'' that consists of a series of puzzles. These were later reprinted in the anthology \emph{The Lady or The Tiger? and Other Logic Puzzles}. The last puzzle, which describes the asylum alluded to in the title, was designed to be especially difficult. With the help of automated reasoning, we show that the puzzle's hypotheses are, in fact, inconsistent, which is to say, no such asylum can possibly exist.
\end{abstract}

\section{Introduction}

The logician, author, and sleight-of-hand magician Raymond Smullyan was one of the most celebrated puzzle makers of all time. In addition to a number of philosophical works and logic textbooks, he published more than a dozen collections of puzzles, including \emph{Alice in Puzzle-Land} and \emph{King Arthur in Search of His Dog}. His book, \emph{G\"odel's Incompleteness Theorems}, is an accessible introduction to Kurt G\"odel's profound results on the limits of formal provability.

In 1982, in \emph{The Two-Year College Mathematics Journal}, he presented a series of puzzles involving a fictional Inspector Craig of Scotland Yard and some asylums in France \cite{smullyan}. The series was later reprinted in his anthology, \emph{The Lady or the Tiger? and Other Logic Puzzles}. In the asylums, every inhabitant is either a doctor or a patient,
and every inhabitant is either sane or insane. Moreover, the sane inhabitants are entirely sane and the insane inhabitants are entirely insane in the following sense: for any proposition $P$, a sane inhabitant believes $P$ if and only if $P$ is true and an insane inhabitant believes $P$ if and only if $P$ is false.

In the next section, we reproduce the last of these puzzles. Smullyan's solutions show that the hypotheses lead to a shocking conclusion: in this particular asylum, all the patients are sane and all the doctors are insane. The puzzle is thus an homage to a story by Edgar Allen Poe, ``The System of Doctor Tarr and Professor Fether,'' which features an asylum in which the patients have imprisoned the doctors and assumed their roles.

In this article, we show something stronger, namely, that the hypotheses themselves are inconsistent. Whether this conclusion is more or less shocking than Smullyan's, it does serve to mitigate the horror: there cannot be any asylum meeting his description.

In a well-known essay, ``The ways of paradox'' \cite{quine}, the philosopher W.~V.~O.~Quine identified a class of paradoxes that have prima facie plausible assumptions but cannot be realized. For example, starting with the assumption that the male barber of a certain town shaves all and only those men that do not shave themselves, we reach the absurd conclusion that the barber both does and does not shave himself. We must therefore conclude that there is no such barber. The same is true of the asylum in the puzzle: the fact that the hypotheses lead to a contradiction tells us simply that there there is not, and cannot be, such an asylum.

This raises interesting philosophical questions. Fictional narratives require us to suspend disbelief and often present us with counterfactual or implausible scenarios. Such narratives can violate physical laws, as do stories of magic or time travel. Douglas Hofstadter has even challenged us to imagine worlds that violate mathematical laws, for example, a world where $\pi$ is equal to 3 \cite{hofstadter}. But logical impossibility is an especially hard pill to swallow. We often accord fictional characters a provisional existence; for example, Sherlock Holmes is a detective in Arthur Conan Doyle's fictional depiction of London at the turn of the twentieth century. But does it make any sense at all to talk about the asylum of Doctor Tarr and Professor Fether in Smullyan's narrative, knowing that such an asylum cannot possibly exist?

\section{The puzzle}
\label{section:the:puzzle}

The last asylum puzzle, as presented in \emph{The Lady or the Tiger}, runs as follows.

\begin{quotation}
\noindent The last asylum Craig visited he found to be the most bizarre of all. This asylum was run by two doctors named Doctor Tarr and Professor Fether. There were other doctors on the staff as well. Now, an inhabitant was called \emph{peculiar} if he believed that he was a patient. An inhabitant was called \emph{special} if all patients believed he was peculiar and no doctor believed he was peculiar. Inspector Craig found out that at least one inhabitant was sane and that the following condition held:

\emph{Condition C:} Each inhabitant had a best friend in the asylum. Moreover, given any two inhabitants, A and B, if A believed that B was special, then A's best friend believed that B was a patient.

Shortly after this discovery, Inspector Craig had private interviews with Doctor Tarr and Professor Fether. Here is the interview with Doctor Tarr:

\emph{Craig:} Tell me, Doctor Tarr, are all the doctors in this asylum sane?

\emph{Tarr:} Of course they are!

\emph{Craig:} What about the patients? Are they all insane?

\emph{Tarr:} At least one of them is.

The second answer struck Craig as a surprisingly modest claim! Of course, if all the patients are insane, then it certainly is true that at least one is. But why was Doctor Tarr being so cautious? Craig then had his interview with Professor Fether, which went as follows:

\emph{Craig:} Doctor Tarr said that at least one patient here is insane. Surely that is true, isn't it?

\emph{Professor Fether:} Of course it is true! All the patients in this asylum are insane! What kind of asylum do you think we are running?

\emph{Craig:} What about the doctors? Are they all sane?

\emph{Professor Fether:} At least one of them is.

\emph{Craig:} What about Doctor Tarr? Is he sane?

\emph{Professor Fether:} Of course he is! How dare you ask me such a question?

At this point, Craig realized the full horror of the situation! What was it?
\end{quotation}

According to Smullyan, we can assume that when an inhabitant of an asylum says something, they believe it. So each of the following claims is either implicit or explicit in this story.

\begin{enumerate}
\item Tarr is a doctor.
\item Fether is a doctor.
\item There are other doctors in the asylum.
\item By definition, an inhabitant A is \emph{peculiar} if A believes that A is a patient.
\item By definition, an inhabitant A is \emph{special} if all patients believe that A is peculiar and no doctor believes that A is peculiar.
\item At least one inhabitant is sane.
\item Condition C: Given any two inhabitants, A and B, if A believes that B is special, then A's best friend believes that B is a patient.
\item Tarr believes that every doctor is sane.
\item Tarr believes that at least one patient is insane.
\item Fether believes that every patient is insane.
\item Fether believes that at least one doctor is sane.
\item Fether believes that Tarr is sane.
\end{enumerate}

In the list that follows, we express these hypotheses in the language of symbolic logic. The expression $\fn{Doctor}(\fn{Tarr})$ says that Tarr is a doctor. Here, the symbol $\fn{Doctor}(\cdot)$ represents a \emph{predicate}, otherwise known as a \emph{unary relation}. An expression $\ex x \ldots$ should be read ``there exists an $x$ such that \ldots'' and an expression $\fa x \ldots$ should be read ``for every $x$ \ldots\,.'' The symbols $\land$, $\limplies$, $\liff$, $\lnot$ stand for ``and,'' ``implies,'' ``if and only if,'' and ``not,'' respectively.

In our formal representation of the hypotheses, we express the fact that someone is a patient by saying that they are not a doctor, and we express the fact that someone is insane by saying that they are not sane. We repeatedly make use of the fact that, according to the rules that Smullyan has laid out, a statement of the form ``$x$ believes $P$'' is true if and only if $x$ is sane and $P$ holds or $x$ is insane and $P$ doesn't hold. This is logically equivalent to saying ``$x$ is sane if and only if $P$,'' which is represented by the expression $\fn{Sane}(x) \liff P$.

\begin{enumerate}
\item $\fn{Doctor}(\fn{Tarr})$
\item $\fn{Doctor}(\fn{Fether})$
\item $\ex x \fn{Doctor}(x) \land x \ne \fn{Tarr} \land x \ne \fn{Fether}$
\item $\fa x \fn{Peculiar}(x) \liff (\fn{Sane}(x) \liff \lnot \fn{Doctor}(x))$
\item $\fa x \fn{Special}(x) \liff (\fa y \lnot \fn{Doctor}(y) \liff (\fn{Sane}(y) \liff \fn{Peculiar}(x)))$
\item $\ex x \fn{Sane}(x)$
\item $\fa x \fa y (\fn{Sane}(x) \liff \fn{Special}(y)) \limplies (\fn{Sane}(\fn{bestFriend}(x)) \liff \lnot \fn{Doctor}(y))$
\item $\fn{Sane}(\fn{Tarr}) \liff \fa x \fn{Doctor}(x) \limplies \fn{Sane}(x)$
\item $\fn{Sane}(\fn{Tarr}) \liff \ex x \lnot \fn{Doctor}(x) \land \lnot \fn{Sane}(x)$
\item $\fn{Sane}(\fn{Fether}) \liff \fa x \lnot \fn{Doctor}(x) \limplies \lnot \fn{Sane}(x)$
\item $\fn{Sane}(\fn{Fether}) \liff \ex x \fn{Doctor}(x) \land \fn{Sane}(x)$
\item $\fn{Sane}(\fn{Fether}) \liff \fn{Sane}(\fn{Tarr})$.
\end{enumerate}
Notice that the definitions in 4 and 5 are presented as ``if and only if'' statements. We express condition $C$ by choosing a function symbol $\fn{bestFriend}(x)$ to denote $x$'s best friend.

The statements above all belong to the system of logic known as \emph{first-order logic}. The words ``first order'' indicate that the quantifiers range over objects in the intended interpretation, in this case, individuals in the intended asylum. There is no quantification over sets of individuals, relationships between individuals, or functions.

Notice that we could equally well have introduced new predicates $\fn{Insane}(x)$ and $\fn{Patient}(x)$, together with additional axioms $\fa x \fn{Insane}(x) \liff \lnot \fn{Sane}(x)$ and $\fa x \fn{Patient}(x) \liff \lnot \fn{Doctor}(x)$. This would result in what logicians call a \emph{definitional extension} of our original formulation, and it would not affect the conclusions that can be derived in the original language. This illustrates the fact that there are often choices to be made when rendering informal statements in a formal language.

Modern logic provides us with the definition of a \emph{model} for a first-order language, which consists of a set of objects together with interpretations of the predicate and function symbols as predicates and functions on those objects. It also provides a definition of what it means for a sentence to be \emph{true} in a model. Finally, logic provides us with notions of formal \emph{derivation}, or \emph{proof}.
Conventional proof systems for first-order logic are \emph{sound}, which means that anything provable from a set of hypotheses is true in every model of those hypotheses. They are also \emph{complete}, which means that if something is true in every model of some hypotheses, then it is formally provable from those hypotheses. Soundness and completeness serve to connect formal notions of proof to the formal notion of what it means to be true in a model. This relationship is fundamental to mathematical logic. Another result that is central to mathematical logic is the fact that first-order logic is \emph{undecidable}. This means that although we can write a computer program to search systematically for a proof of a given first-order statement, there is no algorithm that can determine, in general, whether such a proof exists. In other words, when there is no such proof, we have to reconcile ourselves to the fact that the search may run forever, and we may never know when it is time to give up. For more on mathematical logic, see, for example, \cite{avigad:book,enderton,rautenberg}.

Below we will show that it is possible to derive a contradiction from Smullyan's assumptions. By soundness and completeness, this is equivalent to saying that there is no model that satisfies all the constraints. In fact, we will see that given the definition of \emph{peculiar}, the definition of \emph{special}, and condition C, only three additional hypotheses are needed to derive a contradiction.

\section{Formal methods and \emph{Vampire}}

In the fall of 2021, three of us taught an undergraduate course
at Carnegie Mellon University
called \emph{Logic and Mechanized Reasoning}, an introduction to logic and formal methods for students in computer science \cite{lamr}. \emph{Formal methods} refers to a body of logical methods and software tools that are used to verify the correctness of hardware and software systems. Such tools include \emph{SAT solvers}, which are used to solve large sets of constraints that can be encoded as formulas in propositional logic or show that no such solution exists; \emph{SMT solvers}, which do the same for a richer language that includes primitives for real and integer arithmetic; \emph{automated theorem provers}, which carry out searches for formal derivations; and \emph{interactive theorem provers}, which help users construct complex formal derivations by hand. For more on automated reasoning, see \cite{handbook:satisfiability,harrison,kroening:strichman,handbook:automated:reasoning}, and for applications to mathematics, see \cite{avigad,heule:kullmann}.

A novel feature of the course was that students had the opportunity to experiment with these tools and use them to solve interesting problems and puzzles. Toward the end of the semester, we showed them how to express the hypotheses of one of Smullyan's logic puzzles in first-order logic and use an automated theorem prover called \emph{Vampire} to derive the expected conclusion \cite{kovacs:voronkov}. For the last homework assignment, we asked them to use Vampire to show, following Smullyan, that the hypotheses of the last asylum puzzle imply that all the patients are sane and all the doctors are insane.

When we tried this ourselves, however, we encountered an unexpected result. When writing statements in the language of formal logic, it is easy to make mistakes, such as switching $x$ and $y$ in a part of a formula or leaving out needed parentheses. The output of a formal tool is only as good as the input, and it is not always immediately clear whether the input means what we think it means. Both mathematicians and computer scientists are familiar with the practice of carrying out \emph{sanity checks}: before trying to prove a hard theorem, we instantiate the hypotheses to a familiar structure and convince ourselves that the conclusion makes sense; before trying to prove a complex identity, we compute the left- and right-hand sides at a few values to make sure they agree; and before deploying an algorithm, we test it on a few inputs to make sure it behaves as intended. When it comes to formal theorem proving, there is a natural sanity check, namely, checking that the hypotheses of a putative theorem are \emph{consistent}. In other words, we check that they do not imply a contradiction. Knowing that the hypotheses of a formal theorem are consistent is not enough to guarantee that the theorem has the intended meaning, but when the hypotheses are \emph{inconsistent}, it is almost always a sign that we have done something wrong.

When we asked Vampire to prove a contradiction from the hypotheses, it succeeded in doing so within a few dozen milliseconds. With a little trial and error, we were able to reduce the set of contradictory hypotheses to 4, 5, 7, 8, 10, and 12. Vampire expects its input in \emph{TPTP} format, where ``TPTP'' stands for ``Thousands of Problems for Theorem Provers,'' an online database of problems \cite{tptp}. The relevant input in this case is shown in Figure~\ref{figure:tptp}. The syntax takes some getting used to. Variables are capitalized, whereas predicates and function symbols have to begin with lower case letters. A universal quantifier $\forall X$ is written {\tt ![X]}. The designation {\tt fof} stands for ``first-order formula,'' the designations {\tt ax4} to {\tt ax12} and {\tt conc} are arbitrary labels, the word {\tt axiom} indicates the starting assumptions, and the word {\tt conjecture} indicates the desired conclusion. Incidentally, the TPTP library contains dozens of Smullyan puzzles encoded in this way. We do not know of any such puzzle in which the hypotheses were found to be inconsistent.

\begin{figure}[ht]
{\small
\begin{verbatim}
  fof(ax4, axiom, ![X] : peculiar(X) <=> (sane(X) <=> ~doctor(X))).
  fof(ax5, axiom, ![X] : special(X) <=>
    ![Y] : ~doctor(Y) <=> (sane(Y) <=> peculiar(X))).
  fof(ax7, axiom, ![X] : ![Y] : (sane(X) <=> special(Y)) =>
    (sane(bf(X)) <=> ~doctor(Y))).
  fof(ax8, axiom, sane(tarr) <=> ![X] : doctor(X) => sane(X)).
  fof(ax10, axiom, sane(fether) <=> ![X] : ~doctor(X) => ~sane(X)).
  fof(ax12, axiom, sane(fether) <=> sane(tarr)).
  fof(conc, conjecture, false).
\end{verbatim}
}
\caption{TPTP input to Vampire.}
\label{figure:tptp}
\end{figure}

More generally, one can ask Vampire to try to show that a particular conclusion (other than false) follows from a set of hypotheses. Saying that a conclusion $C$ follows from some hypotheses $H$ is equivalent to saying that $H$ together with the negation of $C$ is contradictory. One therefore asks Vampire to try to refute the combination of $H$ and the negation of $C$. If $C$ doesn't follow from $H$, the search can, in principle, go on forever. In practice, Vampire eventually times out with a message like ``Termination reason: Refutation not found.'' In some circumstances, Vampire can determine that the implication doesn't hold, in which case it reports ``Termination reason: Satisfiable.'' That means that there is a model of $H$ in which $C$ is false.

In the case at hand, Vampire provided us with proof of a contradiction from the hypotheses, a sequence of about two hundred lines, each of which followed from previous lines and the assumptions using a specific rule of inference. The proof left something to be desired, however. For one thing, theorem provers generally put formulas in a format that is convenient for proof search but not for human consumption, and the rules of inference have the same character. An even bigger problem is the overabundance of detail. Ordinary mathematical proofs are designed to highlight the key ideas and organize information in order to convey intuition as to \emph{why} a theorem holds. It is still an important research problem to develop ways of extracting such presentations from formal search tools. Fortunately, encouraged by Vampire's success, we were able to find more readable proofs on our own. These are presented in the next section.

\section{Ordinary proofs}
\label{section:ordinary:proofs}

In this section, we show that the hypotheses of Smullyan's last asylum are inconsistent. We present informal arguments, which can be viewed either as sketches of first-order derivations or as proofs that the hypotheses cannot be jointly satisfied in any model.

We have to be careful when reasoning about Smullyan's asylums. The operant notion of ``belief'' is counterintuitive, since an inhabitant can have inconsistent beliefs. For example, given an asylum in which an inhabitant, Jones, is a doctor and there is at least one patient, an insane inhabitant will believe, simultaneously, that every inhabitant is a doctor and that Jones is not a doctor. So even when we know that Smith believes that every inhabitant is a doctor, we have to resist the temptation to conclude that Smith believes that Jones is a doctor. In the arguments below, we therefore take great pains to justify each inference with the explicit rules that Smullyan has given us.

Smullyan himself presented a proof that the hypotheses imply that all the doctors in the asylum are insane and all the patients are sane. Our first proof picks up from there.

\begin{theorem}
The hypotheses enumerated in Section~\ref{section:the:puzzle} are inconsistent.
\end{theorem}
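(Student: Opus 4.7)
The plan is to first combine hypotheses 8, 10, and 12 to collapse the two interviews into a single dichotomy: since $\fn{Sane}(\fn{Tarr})$ is equivalent (by 8 and 12) to ``every doctor is sane'' and (by 10 and 12) to ``every patient is insane,'' those two universal statements have the same truth value. I would then case-split on which possibility holds.

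\emph{Case 1: both universals hold.} Then $\fn{Sane}(x) \liff \fn{Doctor}(x)$ for every $x$. Hypothesis 4 immediately forces $\fn{Peculiar}(x)$ to be false everywhere, and then hypothesis 5 forces $\fn{Special}(x)$ to be true everywhere. I would then instantiate condition C (hypothesis 7) with $x = \fn{Tarr}$, who is sane in this case; because every $y$ is special, the premise holds and we obtain $\fn{Sane}(\fn{bestFriend}(\fn{Tarr})) \liff \lnot \fn{Doctor}(y)$ for \emph{every} $y$. Taking $y = \fn{bestFriend}(\fn{Tarr})$ yields precisely the statement $\fn{Peculiar}(\fn{bestFriend}(\fn{Tarr}))$ (by hypothesis 4), contradicting the fact that nobody is peculiar.

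\emph{Case 2: both universals fail.} Then there exist an insane doctor $d$ and a sane patient $p$, and both are peculiar by hypothesis 4. I would sub-split on whether Smullyan's conclusion $\fa x \fn{Sane}(x) \liff \lnot \fn{Doctor}(x)$ holds. If it does, then every inhabitant is either a sane patient or an insane doctor and hence peculiar; hypothesis 5 then makes everyone special; condition C applied to $p$ gives $\fn{Sane}(\fn{bestFriend}(p)) \liff \lnot \fn{Doctor}(y)$ for all $y$, and substituting $y = d$ and $y = p$ produces incompatible truth values. If instead Smullyan's conclusion fails, then nobody is special---a non-peculiar $x$ would require all patients insane and all doctors sane (the negation of our Case 2 hypothesis), while a peculiar $x$ would require Smullyan's conclusion itself---so condition C applied to any insane $x$ (say Tarr) again yields $\fn{Sane}(\fn{bestFriend}(x)) \liff \lnot \fn{Doctor}(y)$ for all $y$, contradicted as before by $d$ and $p$.

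The main obstacle is the tightly nested biconditionals in hypotheses 4, 5, and 7: the polarities of $\fn{Sane}$ and $\fn{Doctor}$ are easy to confuse when unfolding them, and the whole argument rests on correctly identifying, in each case, exactly who is peculiar and who is special. A secondary subtlety is that condition C, once triggered, delivers a statement universally quantified over $y$; exploiting this uniformity---either by substituting $\fn{bestFriend}(x)$ for $y$, or by contrasting a doctor with a patient---is where the contradictions actually live.
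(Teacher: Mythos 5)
Your argument is correct --- every case closes, and I checked the polarity bookkeeping in hypotheses 4, 5, and 7 at each step. But it takes a different route from the paper's proof of this particular theorem. The paper's proof of Theorem 1 is deliberately parasitic on Smullyan's own published solution: it imports his conclusion that all doctors are insane and all patients are sane as a black box, uses hypothesis 6 to produce a sane patient, and then needs only one application of condition C to reach a contradiction. That buys brevity and makes the point that the contradiction lies just one step beyond where Smullyan stopped. Your proof instead works from scratch with only hypotheses 4, 5, 7, 8, 10, and 12, which is exactly the content of the paper's \emph{second} theorem, and your case structure (Tarr sane versus insane, with a sub-split in the insane case on whether all doctors are insane and all patients sane) mirrors that second proof almost line for line. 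What you add is a genuine streamlining in the sane case: rather than deriving, as the paper does, that Tarr's best friend is an insane patient and then applying condition C a second time, you substitute $y = \fn{bestFriend}(\fn{Tarr})$ directly into the universally quantified conclusion of condition C and observe that the resulting biconditional $\fn{Sane}(\fn{bestFriend}(\fn{Tarr})) \liff \lnot\fn{Doctor}(\fn{bestFriend}(\fn{Tarr}))$ is, by hypothesis 4, literally the assertion that Tarr's best friend is peculiar --- contradicting the fact that nobody is. The same diagonal substitution would shorten the paper's argument as well. The trade-off is that your proof, like the paper's second one, gives the stronger inconsistency result but loses the narrative punch of ``Smullyan was one inference away from noticing the asylum cannot exist.''
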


\begin{proof}
Suppose the hypotheses hold. By Smullyan's argument, all the doctors in the asylum are insane and all the patients are sane. This means that both the doctors and the patients believe that they are patients. By definition, this means that everyone is peculiar.

We next show that every inhabitant is special. Let $x$ be any inhabitant. Since everyone is peculiar, $x$ is peculiar. Since all the patients are sane, they believe that $x$ is peculiar, and since all the doctors are insane, no doctor believes that $x$ is peculiar. By definition, this means that $x$ is special.

We now use condition C to derive a contradiction. By hypothesis 6, there is a sane patient, because all the doctors are insane. Let $x$ be a sane patient, and let $y$ be $x$'s best friend. If $y$ is sane, then $y$ believes that Fether is a doctor. If $y$ is insane, then $y$ believes that $x$ is a doctor. Either way, there is someone, $z$, that $y$ believes to be a doctor.

Since everyone is special, $z$ is special, and since $x$ is sane, $x$ believes that $z$ is special. By condition C, $y$ believes that $z$ is a patient. But we have chosen $z$ so that $y$ believes that $z$ is a doctor, a contradiction.
\end{proof}

We now provide an alternative proof that is more direct and uses only six of the hypotheses.

\begin{theorem}
Hypotheses 4, 5, 7, 8, 10, and 12 are inconsistent.
\end{theorem}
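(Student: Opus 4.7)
The plan is to case-split on $\fn{Sane}(\fn{Tarr})$. In each sub-case I would instantiate hypothesis 7 so as to force $\fn{Sane}(\fn{bestFriend}(x)) \liff \lnot\fn{Doctor}(y)$ to hold for a fixed inhabitant $x$ and \emph{every} $y$, and then derive a contradiction by choosing $y$ appropriately.

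First, suppose $\fn{Sane}(\fn{Tarr})$. Hypothesis 8 gives that every doctor is sane, hypothesis 12 upgrades this to $\fn{Sane}(\fn{Fether})$, and hypothesis 10 then gives that every non-doctor is insane; thus $\fn{Sane}(x) \liff \fn{Doctor}(x)$ uniformly. Hypothesis 4 then forces $\fn{Peculiar}(x)$ to be identically false, and hypothesis 5 collapses the inner biconditional defining $\fn{Special}$ to a tautology, so everyone is special. Applying hypothesis 7 with $x = \fn{Tarr}$ and $y = \fn{bestFriend}(\fn{Tarr})$ gives $\fn{Sane}(\fn{bestFriend}(\fn{Tarr})) \liff \lnot\fn{Doctor}(\fn{bestFriend}(\fn{Tarr}))$, which contradicts $\fn{Sane} \liff \fn{Doctor}$ applied at $\fn{bestFriend}(\fn{Tarr})$.

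Otherwise $\lnot\fn{Sane}(\fn{Tarr})$, so by 12 also $\lnot\fn{Sane}(\fn{Fether})$; then hypothesis 8 supplies an insane doctor $d$ and hypothesis 10 a sane non-doctor $p$. Since $p$ is peculiar (by 4), hypothesis 5 says $\fn{Special}(p)$ is equivalent to $\Phi := \fa y (\lnot\fn{Doctor}(y) \liff \fn{Sane}(y))$, and I would split on $\Phi$. If $\Phi$ holds, the entire asylum satisfies $\fn{Sane}(x) \liff \lnot\fn{Doctor}(x)$, hence (by 4 and 5) everyone is peculiar and special; then hypothesis 7 with $x = p$ yields $\fn{Sane}(\fn{bestFriend}(p)) \liff \lnot\fn{Doctor}(y)$ for every $y$, and instantiating at $y = p$ versus $y = d$ is immediately contradictory. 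If $\Phi$ fails, unfolding hypothesis 5 shows $\lnot\fn{Special}(x)$ for every $x$: the peculiar branch reduces to $\Phi$, and the non-peculiar branch reduces to $\fa y (\fn{Doctor}(y) \liff \fn{Sane}(y))$, which fails at $y = d$. Then hypothesis 7 with $x = \fn{Tarr}$ (both sides of its antecedent being false) gives $\fn{Sane}(\fn{bestFriend}(\fn{Tarr})) \liff \lnot\fn{Doctor}(y)$ for every $y$, again contradicted by $y = p$ versus $y = d$.

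I expect the main obstacle to be the analysis of $\fn{Special}$ through the nested biconditional in hypothesis 5: its truth value must be computed separately for peculiar and non-peculiar $x$, and in the $\lnot\Phi$ sub-case the insane doctor $d$ is the witness that falsifies the non-peculiar branch. Once $\fn{Special}$ has been pinned down as identically true or identically false in each sub-case, hypothesis 7 transfers sanity through $\fn{bestFriend}$ and yields the contradiction uniformly.
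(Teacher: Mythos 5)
Your proof is correct and follows essentially the same route as the paper's: a case split on $\fn{Sane}(\fn{Tarr})$, with the insane case further split on whether all doctors are insane and all patients sane (your $\Phi$), determining in each branch that everyone or no one is special and then instantiating hypothesis 7 twice to contradict. Your only deviations are minor streamlinings—collapsing the first case to a single application of 7 via $y = \fn{bestFriend}(\fn{Tarr})$, and pivoting on Tarr rather than the insane doctor in the final sub-case—which do not change the structure of the argument.
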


\begin{proof}
From 8, 10, and 12, there are two possibilities: either Tarr is sane, in which case Fether is sane, all doctors are sane, and all patients are insane; or Tarr is insane, in which case Fether is insane, there is at least one insane doctor, and there is at least one sane patient. We consider each case in turn.

First, assume Tarr is sane, in which case all doctors are sane and all patients are insane. Since Tarr is sane, Tarr is a doctor.

By 4, nobody is peculiar, because being peculiar amounts to being a sane patient or an insane doctor. By 5, everyone is special, since for each inhabitant $x$, all the doctors are sane and believe that $x$ is not peculiar, and all the patients are insane and believe that $x$ is peculiar.

Since Tarr is sane, Tarr believes that he himself is special. By 7 (condition C), Tarr's best friend believes that Tarr is a patient. So Tarr's best friend is insane, and hence a patient. But Tarr also believes that his best friend is special, and hence, by 7, Tarr's best friend believes that they are a patient. But Tarr's best friend \emph{is} a patient, which contradicts the fact that Tarr's best friend is insane.

So we can assume that Tarr is insane, which means that there is at least one insane doctor and there is at least one sane patient. Let $x$ be an insane doctor and let $y$ be a sane patient. Then $x$ and $y$ are both peculiar, by 4. We now consider two cases.

First, suppose all the doctors are insane and all the patients are sane. Then no doctor believes that $x$ is peculiar and every patient believes that $x$ is peculiar, and so, by 5, $x$ is special. The same argument shows that $y$ is also special. Since $y$ is sane, $y$ believes that $x$ is special. By 7, $y$'s best friend believes that $x$ is a patient. Since $x$ is a doctor, $y$'s best friend is insane. By the same token, since $y$ is sane, $y$ believes that $y$ is special, and so, by 7, $y$'s best friend believes that $y$ is a patient. Since $y$ \emph{is} a patient, $y$'s best friend must be sane, a contradiction.

So either there is a sane doctor or an insane patient. Suppose $z$ meets either description. If $z$ is a sane doctor, then $z$ believes that $x$ is peculiar, and so, by 5, $x$ is not special. If $z$ is an insane patient, then $z$ believes that $x$ is not peculiar, and so again by 5, $x$ is not special. So, in either case, $x$ is not special. The same argument with $y$ in place of $x$ establishes that $y$ is not special.

Because $x$ is insane, $x$ believes that $x$ is special. By 7, $x$'s best friend believes that $x$ is a patient. Since $x$ is a doctor, $x$'s best friend is insane. Because $x$ is insane, $x$ also believes that $y$ is special. By 7, $x$'s best friend believes that $y$ is a patient. Since $y$ \emph{is} a patient, $x$'s best friend must be sane, a contradiction.
\end{proof}

\paragraph{Acnowledgements.}
We are grateful to Vedran \v{C}a\v{c}i\'{c} and two anonymous referees for helpful comments and suggestions.
Bentkamp's research has received funding from a Chinese Academy of Sciences President's International Fellowship for Postdoctoral Researchers (grant No.\ 2021PT0015).
This work has been partially supported by the Hoskinson Center for Formal Mathematics at Carnegie Mellon.

\end{document}